\title{Algebraic structure representations for lattices}
\author{Martha L.H. Kilpack}
\address{Department of Mathematics, Brigham Young University, Provo,
  UT 84602 USA}
\email{mlhkilpack@mathematics.byu.edu}
\author{Ryan Kurth-Oliveira} 
\address{Department of Mathematics, Brigham Young University, Provo,
  UT 84602 USA}
\email{rkurth4@yahoo.com}
\author{Madeline E. May} 
\address{Department of Mathematics, Brigham Young University, Provo,
  UT 84602 USA}
\email{madelinemay1996@gmail.com}
\subjclass[2010]{Primary: lattices 06B15 and 06A99, subalgebras 08A30; Secondary: loops 20N05}
\newtheorem{proposition}{Proposition}
\newtheorem{theorem}{Theorem}
\newtheorem{definition}{Definition}
\begin{document}

\maketitle

\begin{abstract}
 For an arbitrary group, the subgroups form a lattice with order determined by set inclusion.  Not every lattice is isomorphic to the subgroup lattice for a group. However, Birkhoff and Frink proved that any compactly generated lattice is isomorphic to a subalgebra lattice for some algebraic structure. An algebraic structure is a set $A$ with operations from $A^n$ to $A$ where $n$ is a non-negative integer.  Although the proof by Birkhoff and Frink is constructive, many of the operations described are not needed for an algebraic structure to represent a given lattice. In this paper we utilize concepts in the proof by Birkhoff and Frink to describe and count functions that are used to create algebraic structure representations for certain finite lattice types.

 \end{abstract}

\section{Introduction} We put items into an order everyday: lines at fast food restaurants, tasks to complete, favorite food items ...  An ordering is not always linear in nature. Two examples of non-linear orderings are the natural numbers with $x$ being ``less than" $y$ when $x$ is a divisor of $y$, and sets under the subset ordering. A special type of ordering is that of a lattice.  All linear orderings are lattices and the orderings of divisor and subset are also examples of lattices. 

One special lattice is that of subgroups.  In the mathematical group, which is a set with a binary operation, subsets that are closed under the binary operation are subgroups.  These subgroups form a lattice under the subset ordering.  The subgroup ordering for a group $G$, called a subgroup lattice, allows one to discover information about the original group \cite{subgroup}.  

Although subgroup lattices are an important type of lattice for study, not all lattices are isomorphic to a subgroup lattice. Birkhoff and Frink showed, given compactly generated lattice $L$, an algebraic structure (set with operations) $\textbf{A}$ can be found where $L$ is isomorphic to the subalgebra lattice of $\textbf{A}$ \cite{birk_frink}. We call $\textbf{A}$ an algebraic representation of $L$.  Since all finite lattices are compact, all finite lattices have an algebraic representation.    

Birkhoff and Frink's proof for algebraic representations is constructive.  This means for a finite lattice, one can produce an algebraic representation. However, this construction creates an algebraic structure with numerous unnecessary functions.  For example, following the proof exactly, a lattice of four elements would be represented by an algebraic structure with 4 elements and 50 functions.  It would be better to find an algebraic representation without the unnecessary functions.  

In this paper we will look at different families of lattices and, utilizing ideas from the Birkhoff and Frink theorem, count the number of functions in a representation for the family of lattices.

\pagebreak

\section{Preliminaries and Notation}

In this section we look at useful definitions and notations. We utilize definitions and notations from Burris and Sankappanavar's book \textit{A Course in Universal Algebra} chapter two \cite{univ_alg}*{25--37}. 

\begin{definition}
An algebraic structure is \textbf{A} $= (A, F)$ for a set $A$ and functions $f \in F$ $f: A^n \to A$ where $n \in \mathbb{N} \cup \{0\}$.
\end{definition}

\noindent Some examples of algebraic structures are groups, rings, loops and lattices. We define these examples of algebraic structures.
\\
Groups: A $group$ is an algebra $(G,\cdot, ^{-1}, e)$ with a binary, a unary, and a nullary operation in which the following identities are true:

G1: $x \cdot (y \cdot z) = (x \cdot y) \cdot z$

G2: $x \cdot e = e \cdot x = x$

G3: $x \cdot x^{-1} = x^{-1} \cdot x = e$.

A group is $Abelian$ if the following also holds:

G4: $x \cdot y = y \cdot x$.
\\
Rings: A $ring$ is an algebra $(R,+,\cdot,-,0)$ where $+$ and $\cdot$ are binary, $-$ is unary, and $0$ is nullary, satisfying the following conditions:

R1: $(R,+,-,0)$ is an Abelian group

R2: For $(R,\cdot)$, has the property (G1)

R3: $x \cdot (y+z) = (x \cdot y)+(x \cdot z)$

$(x+y) \cdot z = (x \cdot z) + (y \cdot z)$.
\\
Loops: A $loop$ is an algebra $(Q,\cdot,\backslash,/, e)$ with three binary operations and a nullary that satsify (G2) and the following identities:

Q1: $x \backslash (x \cdot y) = y; (x \cdot y) / y = x$

Q2: $x \cdot (x \backslash y) = y; (x / y) \cdot y = x$.
\\
Lattices: A $lattice$ is an algebra $(L, \vee, \wedge)$ with two binary operations \textit{join} and \textit{meet} satisfying the following conditions:

\begin{tabular}{llll}

L1:& $x \vee y \approx y \vee x$& \hspace{1cm}&

 $x \wedge y \approx y \wedge x$ \\

L2: &$x \vee (y \vee z) \approx (x \vee y) \vee z)$&&

 $x \wedge (y \wedge z) \approx (x \wedge y) \wedge z)$ \\

L3:& $x \vee x \approx x$&&

 $x \wedge x \approx x$ \\

L4:& $x \approx x \vee (x \wedge y)$&&

 $x \approx x \wedge (x \vee y)$.
\end{tabular}

There is an equivalent definition using partial orders. A partial order $L$ is a lattice if and only if for every $x,y$ in $L$ both $sup\{x,y\}$ and $ inf\{x,y\}$ are in $L$ \cite{univ_alg}*{5--8}.
As lattices are partially ordered sets, we can use Hasse diagrams to display them. Examples of Hasse diagrams of lattices can be found in Figure~\ref{lattice_ex}.


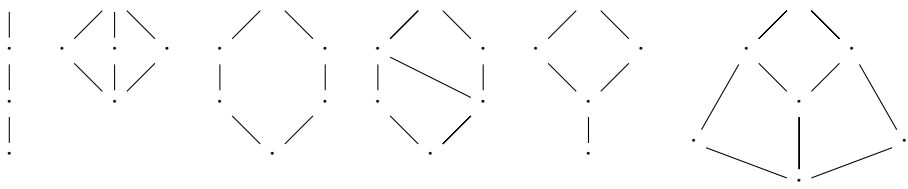
\begin{figure}
    \centering
    \caption{Hasse Diagrams of Lattices}
    \label{lattice_ex}

\begin{tikzpicture}[scale=.7]
  \node (one) at (0,2) {$\cdot$};
  \node (a_1) at (0,1) {$\cdot$};
  \node (a_2) at (0,0) {$\cdot$};
  \node (zero) at (0,-1) {$\cdot$};
  \draw (one) -- (a_1) -- (a_2) -- (zero);
  \node (one2) at (2,2) {$\cdot$};
  \node (a_12) at (1,1) {$\cdot$};
  \node (a_22) at (2,1) {$\cdot$};
  \node (a_32) at (3,1) {$\cdot$};
  \node (zero2) at (2,0) {$\cdot$};
  \draw (one2) -- (a_12) -- (zero2)  -- (a_32) -- (one2) -- (a_22) -- (zero2);
    \node (one3) at (5,2) {$\cdot$};
  \node (a_13) at (4,1) {$\cdot$};
  \node (a_23) at (4,0) {$\cdot$};
  \node (a_33) at (6,1) {$\cdot$};
  \node (a_43) at (6,0) {$\cdot$};
  \node (zero3) at (5,-1) {$\cdot$};
  \draw (one3) -- (a_13) -- (a_23) -- (zero3)  -- (a_43) -- (a_33) -- (one3);
  \node (one4) at (8,2) {$\cdot$};
  \node (a_14) at (7,1) {$\cdot$};
  \node (a_24) at (7,0) {$\cdot$};
  \node (a_34) at (9,1) {$\cdot$};
  \node (a_44) at (9,0) {$\cdot$};
  \node (zero4) at (8,-1) {$\cdot$};
  \draw (one4) -- (a_14) -- (a_24) -- (zero4)  -- (a_44) -- (a_34) -- (one4) -- (a_14) -- (a_44) -- (zero4);
  \node (one5) at (11,2) {$\cdot$};
  \node (a_15) at (10,1) {$\cdot$};
  \node (a_25) at (12,1) {$\cdot$};
  \node (a_35) at (11,0) {$\cdot$};
  \node (zero5) at (11,-1) {$\cdot$};
  \draw (one5) -- (a_15) -- (a_35) -- (zero5)  -- (a_35) -- (a_25) -- (one5);
\node (one6) at (15,2) {$\cdot$};
  \node (a_16) at (14,1) {$\cdot$};
  \node (a_26) at (16,1) {$\cdot$};
  \node (a_36) at (15,0) {$\cdot$};
  \node (a_46) at (13,-.75) {$\cdot$};
  \node (a_56) at (17,-.75) {$\cdot$};
  \node (zero6) at (15,-1.5) {$\cdot$};
  \draw (one6) -- (a_16) -- (a_46) -- (zero6)  -- (a_56) -- (a_26) -- (one6) -- (a_16) -- (a_36) -- (zero6) -- (a_36) -- (a_26) -- (one6);
\end{tikzpicture}

\end{figure}
As a majority of this paper will deal with lattices, we will go over the required terminology.
For lattice $L$, $L'$ is called the dual lattice if there is an isomorphism $f:L$ to $L'$ where for $x,y\in L$, $x\wedge y=f(x) \vee ' f(y)$ and $x\vee y=f(x) \wedge ' f(y)$.
We call two elements $x,y$ $incomparable$ when $x \not\leq y$ and $y\not\leq x$. If a lattice has no incomparable elements then it is a totally or linearly ordered set.
We say $x$ $covers$ $y$ if $x<y$ and, for $y \leq z \leq x$, $z = y$ or $ z = x$. Meaning there are no elements between $x$ and $y$.

Going back to algebraic structures in general, let's look at what it means to be a subalgebra. 
For example, the subalgebra of a group is a subgroup, and the subalgebra of a ring is a subring. Subalgebras for an algebraic structure $\textbf{A}$ form a lattice with the ordering of set inclusion. $\textbf{B}$ is a $subalgebra$ of $\textbf{A}$ if $B \subseteq A$ and every fundamental operation of $\textbf{B}$ is a restriction of a corresponding operation of  $\textbf{A}$ \cite{univ_alg}*{31}. 
We will let $\textbf{Sub}(\textbf{A})$ denote the subalgebra lattice for $\textbf{A}$. Birkhoff and Frink proved in 1948 the following theorem:

\begin{theorem}[Birkhoff and Frink] \cite{birk_frink}
If $\textbf{L}$ is an compactly generated lattice, then $\textbf{L} \cong \textbf{Sub}(\textbf{A})$, for some algebra $\textbf{A}$. 
\end{theorem}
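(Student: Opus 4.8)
The plan is to realize $\textbf{L}$ through its compact elements together with a finitary closure operator, and then to encode that closure operator as a family of finitary operations on a set. Write $K$ for the set of compact elements of $\textbf{L}$. Since $\textbf{L}$ is compactly generated it is complete, and every $a \in L$ satisfies $a = \bigvee \{k \in K : k \leq a\}$, so the assignment $\phi(a) = \{k \in K : k \leq a\}$ is an order-embedding of $\textbf{L}$ into the power set of $K$. The first step is to identify the image of $\phi$: I would show that a subset $S \subseteq K$ has the form $\phi(a)$ precisely when it is \emph{closed}, meaning that $k \in K$ and $k \leq \bigvee S$ force $k \in S$. Compactness is what makes this manageable, since $k \leq \bigvee S$ already implies $k \leq s_1 \vee \cdots \vee s_n$ for finitely many $s_1, \ldots, s_n \in S$.

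Next I would build the algebra. Take $A = K$ as the underlying set. For each finite sequence $(a_1, \ldots, a_n)$ of elements of $K$ and each $b \in K$ with $b \leq a_1 \vee \cdots \vee a_n$, introduce an $n$-ary operation $f$ that returns $b$ on the input $(a_1, \ldots, a_n)$ and returns a harmless default (say $x_1$) on every other input; the degenerate case $n = 0$ contributes the constants needed to pin down the least subalgebra. By construction a subset $S \subseteq A$ is closed under all of these operations exactly when $a_1, \ldots, a_n \in S$ together with $b \leq a_1 \vee \cdots \vee a_n$ force $b \in S$, which is precisely the closure condition from the previous step. Hence the subalgebras of $\textbf{A}$ are exactly the closed subsets of $K$.

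It remains to verify that $\phi$ is a lattice isomorphism from $\textbf{L}$ onto $\textbf{Sub}(\textbf{A})$. That $\phi$ is injective and order-reflecting follows from the embedding in the first step, and the second step shows $\phi$ lands in, and surjects onto, the subalgebras. The delicate point — and the step I expect to be the main obstacle — is compatibility with joins: in $\textbf{Sub}(\textbf{A})$ the join of two subalgebras is the subalgebra \emph{generated} by their union, not the union itself, so I must show that the subalgebra generated by $\phi(a) \cup \phi(a')$ equals $\phi(a \vee a')$. This is exactly where compactness re-enters, since every compact $k \leq a \vee a'$ must be dominated by a finite join of compacts drawn from $\phi(a) \cup \phi(a')$, and the corresponding operation then pulls $k$ into the generated subalgebra. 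Meets are easier, as $\phi(a \wedge a') = \phi(a) \cap \phi(a')$ holds directly. Combining these gives $\textbf{L} \cong \textbf{Sub}(\textbf{A})$.
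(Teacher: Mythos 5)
Your proposal is correct and follows essentially the same route as the proof the paper relies on (the standard Birkhoff--Frink argument as presented in Burris and Sankappanavar): pass to the closed sets of compact elements and realize the closure operator via the operations $f_{B,b}$ that output $b$ on the designated input and default to $x_1$ otherwise, which is exactly the construction the paper reproduces in its $M_2$ example. No substantive differences to report.
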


The proof of the Birkhoff and Frink theorem is constructive. Burris and Sankappanavar give a good undergraduate-level proof \cite{univ_alg}*{34}. Following the proof allows one to find the operations that generate an algebraic structure with an isomorphic subalgebra lattice to any given lattice.
All lattices in this paper are finite, which means they are compactly generated.

 As mentioned in the introduction, we will be building algebraic structures given a lattice. We will use the following definition for clarity and brevity in our propositions.

\begin{definition}
An algebraic representation of a lattice $L$ is an algebraic structure whose subalgebra lattice is isomorphic to $L$.
\end{definition}

We proceed with an example of how Birkhoff and Frink's method is used to compute the algebraic representation of $M_2$, the minimal lattice with incomparable elements as shown in Figure~\ref{m_2}.

\begin{figure}
    \centering
    \caption{$M_2$ Lattice}
    \label{m_2}

\begin{tikzpicture} [scale= .7]
  \node (one) at (0,2) {$1$};
  \node (a_1) at (1,1) {$a_1$};
  \node (a_2) at (-1,1) {$a_2$};
  \node (zero) at (0,0) {$0$};
  \draw (one) -- (a_1) -- (zero) -- (a_2)  -- (one);
\end{tikzpicture}

\end{figure}
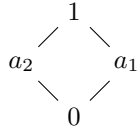


Let $B$ be a subset of $L$, $L= \{1,a_1,a_2,0\}$. Let $\langle B \rangle$ be the closure of $B$, where $\langle B \rangle=\{a\in L$ $ |$ $ a\leq \bigvee_{b\in B} b\}$.

For example, if $B = \{a_1\}$ then the join of $a_1$ is $a_1$, so $\langle \{a_1\} \rangle = \{a_1,0\}$. If $B = \{a_1, a_2\}$ then $a_1 \vee a_2 = 1$. So, $\langle \{a_1, a_2\} \rangle = \{1,a_1,a_2,0\}$. 
The Birkhoff and Frink method creates $n$-ary functions as follow.
Given $B \subseteq L$ 
we create $n-ary$ functions for each 
$b \in \langle B \rangle$,
\[
f_{B,b}(a_1,...,a_n)=
\begin{cases}
b &\text{if } B=\{a_1,...,a_n\},\\
a_1 &\text{ otherwise, } 
\end{cases}
\]
where $a_i \in L$.
Here are the functions for the following sets: $\emptyset, \{a_1\}, \{a_1,a_2\}$

\[
f_{\emptyset,0}=0
\qquad f_{\{a_1,a_2\},0}(x_1,x_2)=
\begin{cases}
0 &\text{if } \{a_1,a_2\}=\{x_1,x_2\},\\
x_1 &\text{otherwise .} 
\end{cases}\]

\[
f_{\{a_1\},0}(x)=
\begin{cases}
0 &\text{if } a_1=x,\\
x &\text{otherwise .} 
\end{cases}
\qquad
f_{\{a_1,a_2\},1}(x_1,x_2)=
\begin{cases}
1 &\text{if } \{a_1,a_2\}=\{x_1,x_2\},\\
x_1 &\text{otherwise .} 
\end{cases}
\]
\[
f_{\{a_1\},a_1}(x)=
\begin{cases}
a_1 &\text{if } a_1=x,\\
x &\text{otherwise .} 
\end{cases} \qquad 
f_{\{a_1,a_2\},a_1}(x_1,x_2)=
\begin{cases}
a_1 &\text{if } \{a_1,a_2\}=\{x_1,x_2\},\\
x_1 &\text{otherwise .} 
\end{cases}
\]
\[
f_{\{a_1,a_2\},a_2}(x_1,x_2)=
\begin{cases}
a_2 &\text{if } \{a_1,a_2\}=\{x_1,x_2\},\\
x_1 &\text{otherwise .} 
\end{cases}
\]

To conserve space we will not list out all 50 functions that the Birkhoff and Frink theorem would describe.
We do not need all 50 operations to create the same closed sets. 
We can eliminate any $f_{B,b}(a_1,...,a_n)$ where $b \in B$ since these functions do not give us any of the new elements needed to complete the closure. Secondly, we eliminate $n-ary$ functions where the needed elements are generated from different subsets with $(n-1) -ary$ functions. So for example, we do not need the unary operation $f_{\{a_1\},0}(x)$ to pick up the $0$ element, because we already have the nullary operation $f_{\emptyset,0}$ generating $0$. Another example to consider is the function that picks up the greatest element $1$. By our first rule, there is no such unary function that generates $1$ and there is only one such binary function, $f_{\{a_1,a_2\},1}(\{x_1,x_2\})$.
After we eliminate operations using these rules we are left with $4$ functions: $f_{\emptyset 0}, f_{\{1\},a_1}(x), f_{\{1\},a_2}(x), f_{\{a_1,a_2\},1}(\{x_1,x_2\})$.

From this example we can see that the Birkhoff and Frink method can give us an excess of functions even for lattices with very few elements.

In this case, four functions is still more than necessary. The subalgebra lattice for the cyclic group $C_{pq}$ is isomorphic to $M_2$, see Figure~\ref{m2-group-zpq}. This group has three functions, where as the reduced method gave us four.

In the following sections, we will count the number of functions the reduced method gives us for various families of lattices.

\begin{figure}
    \centering
    \caption{}
    \label{m2-group-zpq}

\begin{tikzpicture} [scale= .7]
  \node (one) at (0,2) {$1$};
  \node (a_1) at (1,1) {$a_2$};
  \node (a_2) at (-1,1) {$a_1$};
  \node (zero) at (0,0) {$0$};
  \node (name) at (0,-1) {$M_2$};
  \draw (one) -- (a_1) -- (zero) -- (a_2)  -- (one);  \node (one) at (0,2) {$1$};
  \node (iso) at (3,1) {$\cong$};
  \node (one2) at (6,2) {$C_{pq}$};
  \node (a_12) at (7,1) {$C_q$};
  \node (a_22) at (5,1) {$C_p$};
  \node (zero2) at (6,0) {$\{0\}$};
  \node (name2) at (6,-1) {$\textbf{Sub}(C_{pq})$};
  \draw (one2) -- (a_12) -- (zero2) -- (a_22)  -- (one2);
\end{tikzpicture}
\end{figure}
\section{Chain Lattices}

The first family of lattice we consider is the simplest, the chain lattice. A chain lattice is a lattice whose set of $n$ elements is a totally ordered set, see Figure~\ref{chain}. 

\begin{figure}
    \centering
    \caption{Chain Lattice}
    \label{chain}

\begin{tikzpicture}[scale=.7]
  \node (one) at (0,2) {$1$};
  \node (a_1) at (0,1) {$a_{n-2}$};
  \node (a_2) at (0,0) {$\vdots$};
  \node (a_4) at (0,-1) {$a_{1}$};
  \node (zero) at (0,-2) {$0$};
  \draw (one) -- (a_1) -- (a_2)  -- (a_4) -- (zero);
\end{tikzpicture}
\end{figure}
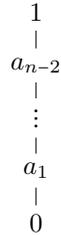

\begin{proposition}[Chain Lattice]\label{ch}
Given a chain lattice $L$ of $n$ elements, there exists an algebraic representation with $n$ elements and $n-1$ functions.
\end{proposition}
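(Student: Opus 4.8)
The plan is to exhibit an explicit algebra in the spirit of the Birkhoff--Frink reduction and then compute its subalgebra lattice directly. Write the chain as $0 = c_0 < c_1 < \cdots < c_{n-1} = 1$ and take the underlying set $A = \{c_0, \dots, c_{n-1}\}$, so that $|A| = n$ automatically. For the operations I would keep exactly the reduced functions a chain calls for: one nullary operation $f_{\emptyset, c_0}$ producing the bottom element, together with the unary ``step-down'' operations $f_{\{c_k\}, c_{k-1}}$ for $k = 2, \dots, n-1$, where as usual $f_{\{c_k\}, c_{k-1}}(x) = c_{k-1}$ if $x = c_k$ and $f_{\{c_k\}, c_{k-1}}(x) = x$ otherwise. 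Each of these is a legitimate Birkhoff--Frink function since $c_{k-1} \le c_k$, i.e. $c_{k-1} \in \langle \{c_k\} \rangle$. Counting gives $1 + (n-2) = n-1$ operations, matching the claim.

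It then remains to show the subuniverses are exactly the principal down-sets $D_k = \{c_0, \dots, c_k\}$ for $0 \le k \le n-1$, since these form a chain of length $n$ and hence $\textbf{Sub}(\textbf{A}) \cong L$ via $c_k \mapsto D_k$. First I would check each $D_k$ is closed: it contains $c_0$ (the value of the nullary), and each step-down either fixes every element of $D_k$ or, when its trigger $c_j$ lies in $D_k$, outputs $c_{j-1} \in D_k$. Conversely, given any subuniverse $S$, the nullary forces $c_0 \in S$, so $S \ne \emptyset$; letting $m = \max\{j : c_j \in S\}$, applying the relevant step-downs $f_{\{c_m\}, c_{m-1}}, \dots, f_{\{c_2\}, c_1}$ drives $c_m$ down through $c_{m-1}, \dots, c_1$, so $D_m \subseteq S$, while maximality of $m$ gives $S \subseteq D_m$; hence $S = D_m$.

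The point on which the count hinges---and the step I would treat most carefully---is why $n-1$ operations suffice rather than the naive $n$. Keeping a step-down $f_{\{c_1\}, c_0}$ would be redundant, since the element $c_0$ it is meant to generate is already supplied by the nullary; dropping it is exactly what saves one function. Dually, one cannot instead discard the nullary and keep all $n-1$ step-downs: with no constant the empty set becomes a subuniverse, inflating $\textbf{Sub}(\textbf{A})$ to $n+1$ elements and breaking the isomorphism. So the argument must make explicit that the nullary does double duty, both eliminating $\emptyset$ as a subalgebra and generating $c_0$, and I expect the real content to sit in verifying that no stray subalgebras occur---for instance that a gapped set such as $\{c_0, c_2\}$ fails to be closed because its step-down forces the missing $c_1$. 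Finally I would note that the formula is intended for $n \ge 2$; the one-element case $n = 1$ is degenerate and is handled by a single nullary operation, or by adopting the convention that subalgebras are nonempty.
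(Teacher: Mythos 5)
Your proof is correct, and the algebra you construct is exactly the one the paper arrives at (nullary bottom constant plus the $n-2$ unary ``step-down'' maps), but your route to it is different. The paper argues by induction on $n$: it checks small chains directly, then extends a chain by a new top element $1^*$ and observes that a single new unary function $f_{\{1^*\},1}$ suffices, so the count grows by one. You instead write down the whole operation set in closed form and then verify directly that the subuniverses are precisely the down-sets $D_k$, which is the part the paper leaves implicit --- it asserts that the listed functions ``create the closures'' without ever checking that no additional closed sets arise (e.g.\ that a gapped set like $\{c_0,c_2\}$ fails to be closed). Your explicit both-directions argument ($D_k$ closed; any subuniverse equals some $D_m$) is therefore a more complete verification that $\textbf{Sub}(\textbf{A})\cong L$, and your remarks about why the nullary cannot be traded for a step-down, and about the degenerate case $n=1$ (which the paper silently excludes by starting its base case at $n=2$), are sound additions. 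The inductive formulation in the paper buys a template that the later propositions ($N_k$, Pinecone, Christmas Tree) reuse, whereas your direct computation buys a self-contained and fully checked proof for the chain itself.
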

\begin{proof}
We will proceed by induction. Let $L$ be a chain lattice. 

If $L$ has $2$ elements, $L = \{0,1\}$, then there is only $1$ function  $f_{\emptyset 0}$. 
If $L$ has $3$ elements, $L = \{0,a,1\}$, then we have two functions $f_{\emptyset 0}$ and $f_{\{1\},a}(x)$.
If $L$ has $4$ elements, $L = \{0,a_1,a_2,1\}$, then we have $3$ functions $f_{\emptyset 0}$, $f_{\{a_2\},a_1}(x)$ and $f_{\{1\},a_2}(x)$.

Now, assume for $L$ with $n$ elements we need $n-1$ functions. Let $L^*$ be a chain of $n+1$ elements where $1^*$ is the greatest element and all other elements remain named the same as in $L$.
The closures we need to consider are $\langle 0 \rangle,\langle a_1 \rangle, \langle a_2 \rangle, ..., \langle a_{n-2} \rangle, \langle 1 \rangle$, $\langle1^*\rangle$. The functions given by $L$ create the closures for $\langle 0 \rangle$ to $\langle 1 \rangle$. So we need functions to cover $\langle 1^* \rangle$. 
\[
f_{\{1^*\},1}(x)=
\begin{cases}
1 &\text{if } 1^*=x,\\
x &\text{ otherwise } 
\end{cases}
\]
The $\langle 1^* \rangle = \langle 1 \rangle \cup \{1^*\}$. The functions generated by $L$ pick up the elements of $\langle 1 \rangle$. So we only need one additional function $f_{\{1^*\},1}(x)$ to cover $\langle 1^* \rangle$. Thus $L^*$ has $n+1$ elements and $n$ functions.

\end{proof}

There are known algebraic representations for the chain lattice. One such algebraic structure is groups, more precisely the cyclic groups $C_{p^{n-1}}$.

\section{Variations on the Chain Lattice}

From here we will consider lattices that are variations of the chain lattice. We will start with the simplest variation on the chain, the $N_k$ lattice. A $N_k$ lattice is a lattice with $k+3$ elements made up of a least element $0$, a greatest element $1$, and a chain of $k$ elements $0<b_1<...<b_k<1$. There is also an element $a$ which is incomparable to all $b_i$ $ i \in \{1,...,k\}$.
Figure~\ref{n-k} shows the Hasse diagram of the $N_k$ lattice.

\begin{figure}
    \centering
    \caption{$N_k$ Lattice}
    \label{n-k}

\begin{tikzpicture}[scale=.7]
  \node (one) at (0,2) {$1$};
  \node (a) at (-1.5,0) {$a$};
  \node (b_1) at (1.5,.75) {$b_1$};
  \node (b_2) at (1.5,0) {$\vdots$};
  \node (b_3) at (1.5,-.75) {$b_k$};
  \node (zero) at (0,-2) {$0$};
  \draw (b_1) -- (one) -- (a) -- (zero) -- (b_3);
\end{tikzpicture}
\end{figure}
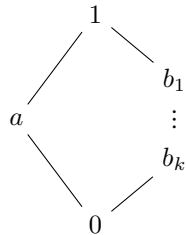

Visually we can see that the $N_k$ appears to be very similar to a chain of $k+2$ elements. The algebraic representation of $N_k$ will have many more functions than the chain.

\begin{proposition}[$N_k$ Lattice]\label{nk}
Given an $N_k$ lattice, there exists an algebraic representation of $k+3$ elements with $2k+2$ functions.
\end{proposition}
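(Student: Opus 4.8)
The plan is to build the representation directly from the reduced Birkhoff--Frink recipe and to count the surviving operations by arity. First I would record the subalgebras: since every closure is a principal down-set $\langle x \rangle = \{y : y \le x\}$, the candidate subalgebras are $\{0\}$, $\{0,a\}$, $\{0,b_1,\dots,b_i\}$ for $1 \le i \le k$, and all of $L$, giving $k+3$ of them, matching the element count. I would then split $L$ into its chain part $0 < b_1 < \dots < b_k < 1$ (a chain on $k+2$ elements) and the extra incomparable element $a$.

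For the chain part I would invoke Proposition~\ref{ch}: the $(k+2)$-chain is represented by $k+1$ operations, namely the nullary $f_{\emptyset,0}$, the $k-1$ unary operations $f_{\{b_i\},b_{i-1}}$ for $2 \le i \le k$, and the unary $f_{\{1\},b_k}$. These realize the closures $\langle 0 \rangle, \langle b_1 \rangle, \dots, \langle b_k \rangle$ and generate $0,b_1,\dots,b_k$ from $1$. It remains to account for $a$. Because $\langle a \rangle = \{0,a\}$ and the only element $a$ covers is $0$ (already produced by $f_{\emptyset,0}$), the closure $\langle a \rangle$ costs no new operation; but $a \le 1$ now forces one new unary operation $f_{\{1\},a}$ so that $1$ still generates the whole lattice.

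The remaining operations are binary and arise from the joins that leave both arguments: for each $i$ we have $a \vee b_i = 1$ (since $a$ and $b_i$ are incomparable with $1$ their only common upper bound), contributing $f_{\{a,b_i\},1}$ for $1 \le i \le k$. I would argue that none of these $k$ operations can be dropped: the second reduction rule deletes $f_{B,b}$ only when $b$ already lies in the closure of a proper subset of $B$ under the lower-arity operations, and here $1 \notin \langle a \rangle$ and $1 \notin \langle b_i \rangle$, so each $f_{\{a,b_i\},1}$ survives. Conversely I would check that no further binary operation and no operation of arity $\ge 3$ is needed: a pair of comparable elements has its join equal to one of the two (hence in $B$) and is removed by the first rule, the only incomparable pairs are the $\{a,b_i\}$, and any triple containing $a$ and some $b_i$ already produces $1$ from the proper subset $\{a,b_i\}$, so the corresponding ternary operation reduces away. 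Summing gives $(k+1) + 1 + k = 2k+2$.

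The main obstacle is the bookkeeping in this last paragraph: one must apply the two reduction rules consistently to confirm that exactly the $k$ binary operations $f_{\{a,b_i\},1}$ remain---neither collapsing them to a single operation via the iterated closure (which the narrow reduction rule does not permit, matching the four-function count for $M_2 = N_1$) nor leaving room for higher-arity operations. A complementary sanity check I would run is that the resulting algebra has precisely the $k+3$ subalgebras listed above: any set containing $a$ together with some $b_i$ immediately generates $1$ and hence all of $L$, so the only closed sets are the principal down-sets, confirming $\mathbf{Sub}(\mathbf{A}) \cong N_k$.
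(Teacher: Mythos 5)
Your construction is the same as the paper's: apply Proposition~\ref{ch} to the chain $0<b_1<\dots<b_k<1$ to get $k+1$ operations, add the single unary $f_{\{1\},a}$, and add the $k$ binary operations $f_{\{a,b_i\},1}$, for a total of $2k+2$. The extra bookkeeping you include (listing the closed sets and checking that no higher-arity operations survive the reduction) is a sound supplement but does not change the argument, which matches the paper's proof essentially verbatim.
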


\begin{proof}
To build up the operations of the algebra, first consider the chain consisting of $0$, $b_1, b_2,...,b_k,$ and $1$. We can apply the same functions obtained from Proposition~\ref{ch} to this chain, giving us $k+1$ functions. Note that this includes the nullary operation. We also will create a function that gives us element $a$ from  element $1$ as shown below.
\[
f_{\{1\},a}(x)=
\begin{cases}
a &\text{if } x=1,\\
x &\text{ otherwise } 
\end{cases}
\]
Lastly, for each $b_i$ where $1\leq i\leq k$, we will create a binary function that produces element $1$ from $\{b_i, a\}$.
\[
f_{\{b_i, a\},1}(x_1, x_2)=
\begin{cases}
1 &\text{if } \{x_1,x_2\}=\{b_i,a\},\\
x_1 &\text{ otherwise } 
\end{cases}
\]
In total, this is $k+1+1+k=2k+2$ functions.
\end{proof}

For an $N_k$ lattice, there does not exist a group representation, with the exception of $k=1$ \cite{subgroup}. The $N_1$ is isomorphic to $M_2$ lattice, the example we used to examine the Birkhoff and Frink method.

Now we will consider a variation on the $N_k$ lattice, what we have named the $N_k^*$ lattice. A $N^*_k$ lattice is a lattice with $k+4$ elements. It is made up of a least element $0$, a greatest element $1$, and a chain of $k$ elements $1<b_1,...,b_k<0$. Unlike the $N_k$ lattice, the $N_k^*$ has two elements $a_1, a_2$ which are incomparable to each other and all $b_i$ for $ i \in \{1,...,k\}$.
Figure~\ref{n-k*} shows the Hasse diagram of $N_k^*$.

\begin{figure}
    \centering
    \caption{$N_k*$ Lattice}
    \label{n-k*}

\begin{tikzpicture}[scale=.7]
  \node (one) at (0,2) {$1$};
  \node (a_1) at (-1.5,0) {$a_1$};
  \node (a_2) at (0,0) {$a_2$};
  \node (b_1) at (1.5,.75) {$b_1$};
  \node (b_2) at (1.5,0) {$\vdots$};
  \node (b_3) at (1.5,-.75) {$b_k$};
  \node (zero) at (0,-2) {$0$};
  \draw (b_1) -- (one) -- (a_1) -- (zero) -- (b_3) -- (zero) -- (a_2) -- (one);
\end{tikzpicture}
\end{figure}
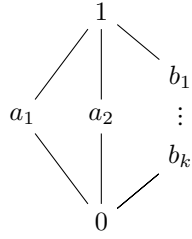

Just as the $N_k$ is a chain with one additional incomparable element, the $N_k^*$ is a $N_k$ lattice with another additional incomparable element. Now we will examine how this change in structure effects the algebra that generates our subalgebra lattice $N_k^*$.

\begin{proposition}[$N^*_k$ Lattice]\label{nk*}
Given a $N_k^*$ lattice, there exists an algebraic representation with $k+4$ elements and $3k+4$ functions.
\end{proposition}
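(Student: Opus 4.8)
The plan is to mirror the proof of Proposition~\ref{nk}, viewing $N_k^*$ as the chain $0<b_k<\cdots<b_1<1$ with the two extra incomparable elements $a_1,a_2$ adjoined. First I would isolate the chain on $0,b_k,\dots,b_1,1$, which has $k+2$ elements, and apply Proposition~\ref{ch} to obtain $k+1$ operations: the nullary $f_{\emptyset,0}$ together with the unary functions generating each chain element from its cover. These reproduce the closures $\langle 0\rangle,\langle b_k\rangle,\dots,\langle b_1\rangle$ as the nested sets $\{0\},\{0,b_k\},\dots,\{0,b_k,\dots,b_1\}$, and starting from $1$ they generate the entire chain.

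Next I would record the operations forced by $a_1$ and $a_2$. Because each $a_i$ is a coatom, I add the two unary functions $f_{\{1\},a_1}$ and $f_{\{1\},a_2}$, so that $a_1,a_2\in\langle 1\rangle$ and hence $\langle 1\rangle=L$, while each singleton still closes only to $\langle a_i\rangle=\{0,a_i\}$ since $a_i$ triggers no unary operation. To make every join equal to the top actually reach $1$ inside the algebra, I then add the binary functions $f_{\{a_1,a_2\},1}$ and $f_{\{a_i,b_j\},1}$ for $i\in\{1,2\}$ and $1\le j\le k$, each returning $1$ exactly on its indicated pair of arguments. This contributes $1+2k$ binary operations, so the running total is $(k+1)+2+(1+2k)=3k+4$ functions on the $(k+4)$-element carrier $L$.

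It then remains to confirm that this reduced operation set actually represents $N_k^*$, i.e.\ that the closed subsets are precisely the $k+4$ closures $\langle c\rangle$, $c\in L$. Since every chosen $f_{B,b}$ satisfies $b=\bigvee B$, no closure can leave the set $\{x:x\le\bigvee B\}$; conversely I would check directly that $\{0\}$, $\{0,a_1\}$, $\{0,a_2\}$, the nested chain down-sets, and $L$ are all closed, using that a binary operation fires only when both of its trigger elements are present and that no closed set other than $L$ contains a pair $\{a_1,a_2\}$ or $\{a_i,b_j\}$.

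The hard part will be the bookkeeping that none of the listed operations is redundant and none is missing under the reduction rules of Section~2. The delicate case is each pair $\{a_i,b_j\}$: I must show $1$ is produced from no proper subset, which follows because $\langle a_i\rangle=\{0,a_i\}$ and $\langle b_j\rangle=\{0,b_k,\dots,b_j\}$ both omit $1$, so all $2k$ of the functions $f_{\{a_i,b_j\},1}$ genuinely survive the reduction rather than collapsing to fewer. Once this is in place, identifying $\{0\}$ with $0$, the two sets $\{0,a_i\}$ with the incomparable atoms, the nested chain down-sets with $b_k<\cdots<b_1$, and $L$ with $1$ exhibits the order isomorphism $\mathbf{Sub}(\mathbf{A})\cong N_k^*$, completing the count.
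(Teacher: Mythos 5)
Your proposal is correct and arrives at exactly the same operation set as the paper: the paper's proof invokes Proposition~\ref{nk} for the embedded copy of $N_k$ (with $a_1$ playing the role of $a$), then adds $f_{\{1\},a_2}$ and the $k+1$ binaries pairing $a_2$ with $a_1,b_1,\dots,b_k$, which unwinds to precisely your list of $k+1$ chain functions, two unary coatom functions, and $2k+1$ binary join functions. Your added verification that the closed sets are exactly the principal down-sets is extra rigor the paper omits, but the construction and count are essentially identical.
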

\begin{proof}
Consider $N_k^*$. Note that the lattice $N_k$ is embedded in $N_k^*$. By applying Proposition~\ref{nk} for $N_k$, we get $2k+2$ functions, assume that this includes $a_1$ as our element $a$ in $N_k$ and excludes the element $a_2$. Next we need the function that gives us $a_2$ from $1$, $f_{\{1\},a_2}(x)$. Lastly we need $k+1$ functions pairing $a_2$ with $a_1,b_1,...,b_k$ to generate $1$. In total there are $2k+2+1+k+1 = 3k+4$ functions.
\end{proof}

In general a $N_k^*$ is not a subalgebra lattice of a group. For $k=1$ however, $N_1^*$ is the subalgebra lattice of the Klein $4$ group. For $n\geq2$ the algebras are unknown.

The next variation on the chain we consider is the $M_n$ lattice. An $M_n$ lattice is a lattice of order $n$ incomparable elements $a_1,...,a_n$ with $0$ and $1$. 

\begin{figure}
    \centering
    \caption{$M_n$ Lattice}
    \label{m-n}

\begin{tikzpicture}[scale=.7]
  \node (one) at (0,2) {$1$};
  \node (a_1) at (2,0) {$a_n$};
  \node (a_2) at (-2,0) {$a_1$};
  \node (a_3) at (1, 0) {...};
  \node (a_4) at (-1,0) {...};
  \node (a_5) at (0,0) {...};
  \node (zero) at (0,-2) {$0$};
  \draw (one) -- (a_1) -- (zero) -- (a_2) -- (one) -- (a_3) -- (zero) -- (a_4) -- (one) -- (a_5) -- (zero);
\end{tikzpicture}
\end{figure}
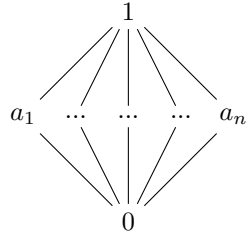

Examining Figure~\ref{m-n}, we can see that the $M_n$ lattice is a collection of $n$ chains sharing a least and greatest element. We now examine how this change in structure effects the algebra that generates our subalgebra lattice $M_n$.

\begin{proposition}[$M_n$ Lattice]
Given a $M_n$ lattice, there exists an algebraic representation with $n+2$ elements and $\frac{n(n+1)}{2} +1$ functions.
\end{proposition}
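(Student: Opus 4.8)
The plan is to imitate the reduced Birkhoff--Frink construction already carried out for $M_2$, which is exactly the case $n=2$ of this proposition. First I would record the closures that govern which functions the reduction keeps. Because $a_i \vee a_j = 1$ for every $i \neq j$, we have $\langle \{a_i\} \rangle = \{0, a_i\}$, while $\langle \{a_i, a_j\} \rangle = L$ for any two distinct atoms, and $\langle \{1\} \rangle = L$. Thus the only ``new'' elements a generating set must reach are $0$ (from $\emptyset$), each atom (from the top), and the top (from a pair of distinct atoms).

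Next I would exhibit three families of operations. One nullary operation $f_{\emptyset,0}=0$ supplies the bottom. For each atom $a_i$ with $1 \le i \le n$ I keep the unary operation
\[
f_{\{1\},a_i}(x)=
\begin{cases}
a_i &\text{if } x=1,\\
x &\text{ otherwise, }
\end{cases}
\]
which extracts $a_i$ from the top; there are $n$ of these. For each unordered pair $\{a_i,a_j\}$ with $i \neq j$ I keep the binary operation
\[
f_{\{a_i,a_j\},1}(x_1,x_2)=
\begin{cases}
1 &\text{if } \{x_1,x_2\}=\{a_i,a_j\},\\
x_1 &\text{ otherwise, }
\end{cases}
\]
which produces the top from a pair of distinct atoms; there are $\binom{n}{2}$ of these. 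Summing gives $1 + n + \binom{n}{2} = 1 + \tfrac{n(n+1)}{2}$ functions on the set of $n+2$ elements, as claimed. I would also note, to justify the reduction, that the ``missing'' binary operations $f_{\{a_i,a_j\},a_k}$ with $k \neq i,j$ are discarded correctly: once $f_{\{a_i,a_j\},1}$ produces $1$, the unary $f_{\{1\},a_k}$ already recovers $a_k$, so these binary operations add nothing.

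The substantive step is to verify that the subalgebra lattice of this algebra is precisely $M_n$, i.e.\ that the subalgebras are exactly $\{0\}$, the $n$ sets $\{0,a_i\}$, and $L$. The nullary forces $0$ into every subalgebra, so $\{0\}$ is least. Each $\{0,a_i\}$ is closed, since the unary operations fire only on the input $1$ (absent here) and the binary operations fire only on a pair of distinct atoms (also absent). The key is an upward collapse: if a subalgebra $B$ contains two distinct atoms $a_i,a_j$, then $f_{\{a_i,a_j\},1}$ forces $1 \in B$, whereupon each $f_{\{1\},a_k}$ forces every atom into $B$, so $B = L$; and if $B$ contains $1$ at all, the same unary operations already force $B = L$. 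Hence nothing lies strictly between an $\{0,a_i\}$ and $L$, and these subalgebras are ordered by inclusion exactly as the bottom, atoms, and top of $M_n$.

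I expect the main obstacle to be this verification rather than the counting. One must check simultaneously that no unintended subset is closed (for instance $\{0,a_i,a_j\}$ fails because $f_{\{a_i,a_j\},1}$ escapes it, and $\{0,a_i,1\}$ fails because the unary operations escape it) and that each intended subalgebra genuinely closes, so that the collection of closed sets is exactly the three families with no extras appearing and none of the intended ones failing. Carrying out that case analysis carefully, together with the redundancy argument for the discarded binary operations, is where the real work lies.
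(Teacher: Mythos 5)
Your proposal is correct and follows essentially the same route as the paper's proof: the nullary operation for $0$, the $n$ unary operations $f_{\{1\},a_i}$, and the $\binom{n}{2}$ binary operations $f_{\{a_i,a_j\},1}$, summing to $\frac{n(n+1)}{2}+1$. Your additional verification that the resulting subalgebra lattice really is $M_n$ (checking that each $\{0,a_i\}$ is closed and that any subalgebra containing $1$ or two distinct atoms collapses to all of $L$) goes beyond what the paper writes down, which only counts the functions.
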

\begin{proof}
As before we will need $\emptyset$ and $\langle 1 \rangle$. The $\emptyset$ again gives us the nullary function $f_{\emptyset 0}$. The $\langle 1 \rangle$ gives $n$ functions $f_{\{1\}, a_i}$. For each $\langle a_i,a_j \rangle$ where $1\leq i, j \leq n$ and $i\neq j$, $\langle a_i,a_j \rangle$ gives one function $f_{\{a_i,a_j\}, 1}$. Together there are ${n \choose 2} = \frac {n(n-1)}{2}$ functions $f_{\{a_i, a_j\}, 1}$. All together there are
$1+n+ \frac{n(n-1)}{2}=\frac{n(n+1)}{2} +1$.
Thus, the lattice $M_n$ has $\frac{n(n+1)}{2} +1$ functions.
\end{proof}

For $n=1,2,3$ we have already examined the algebras whose subalgebra lattices are isomorphic to $M_n$. Where $n>3$, it is known that none of the algebraic representations of $M_n$ are groups.  For $n= p^q$ where $p$ is prime and $q\geq0$, Foguel and Hiller showed there is  a loop whose subalgebra lattice is isomorphic to $M_n$ \cite{loop}*{14}.

\section{Pinecone and Christmas Tree Lattices}


The next type of lattices we wanted to consider are those where special elements build the rest of the lattice. Pinecone lattices are generated by elements covered by one, the maximal elements. Christmas Tree lattices are generated by the elements that cover 0, the minimal elements.
\begin{definition}
A Pinecone lattice $P_n$ is defined as follows:

P1: $P_n$ has $n$ maximal elements $a_1,...,a_n$

P2: For $i\neq j$ and $1 \leq i,j \leq n$, $a_i \vee a_j = 1$ 

P3: For $b \in P_n/\{1\}$, $a_i \wedge a_j = b$ for exactly one pair of $i$ and $j$

P4: For $i \neq k$ and $1 \leq k \leq n$, $a_i \wedge a_j \neq a_k \wedge a_j$ and $a_i \wedge a_k = a_i \wedge a_{i+1} \wedge ... \wedge a_k$.

\end{definition}

\begin{proposition}[Pinecone Lattice]
Given a Pinecone lattice $P_n$ with $n$ maximal elements, there exists an algebraic representation for $P_n$ with $\frac{n^2+n+2}{2}$ elements and $\frac{3n^2-n-2}{2}$ functions.
\end{proposition}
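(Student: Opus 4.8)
The plan is to represent $P_n$ with the same three tiers used for the chain, $M_n$, and $N_k$: one nullary operation giving $0$, unary ``cover'' operations forcing every subalgebra to be a down-set, and binary ``join'' operations forcing closure under joins. Since a finite down-set closed under join is a principal ideal, the closed sets become exactly the ideals $\downarrow x$, and $\mathbf{Sub}(\mathbf A)\cong P_n$. First I would read the order off P1--P4: writing each non-top element uniquely as a meet $a_i\wedge a_j$ with $1\le i\le j\le n$ (the maximal elements being the cases $i=j$, and $0=a_1\wedge\cdots\wedge a_n$), P3 gives a bijection between $P_n\setminus\{1\}$ and the pairs $\{i,j\}$ with $i\le j$, so $|P_n|=\binom{n+1}{2}+1=\frac{n^2+n+2}{2}$, the asserted number of elements.

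Next I would settle the unary tier. By P4 the top $1$ covers each of the $n$ maximal elements, and each proper meet $a_i\wedge a_j$ ($i<j$) is covered by exactly the two meets obtained by contracting one end of the block $\{i,\dots,j\}$; hence there are $n+2\binom n2=n^2$ covering pairs in all. Including $f_{\emptyset,0}$ together with one operation $f_{\{x\},y}$ for each cover $y\lessdot x$ with $y\ne 0$ (exactly as in Proposition~\ref{ch}) makes the subalgebra generated by any single $x$ equal to $\downarrow x$. Discarding the two covers whose lower element is $0$ leaves $n^2-2$ unary operations, so the nullary and unary tiers contribute $n^2-1$.

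The binary tier carries the real content. Every join-reducible element must be produced from a pair strictly beneath it, namely the top $1=a_i\vee a_j$ and each ``interior'' meet $a_i\wedge a_j$ with $2\le i\le j\le n-1$, each of which is the join of its two lower covers. I would include one join operation producing each interior join-reducible element from those two covers, of which there are $\binom{n-1}{2}$, together with one operation $f_{\{a_i,a_{i+1}\},1}$ for each of the $n-1$ consecutive pairs of maximal elements. The identity $(n-1)+\binom{n-1}{2}=\binom n2$ then yields exactly $\binom n2$ binary operations, and hence the grand total $1+(n^2-2)+\binom n2=\frac{3n^2-n-2}{2}$.

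The step demanding the most care---and the main obstacle---is showing that this binary family is neither deficient nor excessive. Unlike $M_n$ and $N_k$, one can neither supply a join operation for every incomparable pair nor take all $\binom n2$ pairs of maximal elements. Some joins are irreplaceable: the subalgebra generated by $a_1\wedge a_3$ and $a_2\wedge a_4$ must contain their join $a_2\wedge a_3$, which no string of cover operations can produce, forcing the interior join operation. Yet many operations toward $1$ are redundant: from the non-consecutive pair $\{a_1,a_3\}$ one reaches $1$ only after the interior operations have manufactured $a_2$, whereupon a consecutive operation finishes the job, so $f_{\{a_1,a_3\},1}$ may be dropped. I would therefore prove, by induction on the covering distance from an incomparable pair $\{x,y\}$ to $x\vee y$, that $x\vee y$ always lies in the subalgebra generated by $\{x,y\}$ under the chosen operations; this certifies that every generated set is a down-set closed under all joins, hence a principal ideal, so that no extraneous subalgebra occurs and $\mathbf{Sub}(\mathbf A)\cong P_n$. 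Once that closure statement is secured the three-tier tally is immediate.
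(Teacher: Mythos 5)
Your proposal is correct, and the counts check out ($1+(n^2-2)+\binom n2=\frac{3n^2-n-2}{2}$), but you take a genuinely different route from the paper. The paper argues by induction on $n$: it peels off one atom $b_1$ of $P_{k+1}$, observes that $\uparrow b_1\cong P_k$, reuses the $P_k$ operations there after shifting the nullary to $0$, and adds $3k+1$ new operations along the new ``edge'' of the triangle; the formula then falls out of the recursion $f(k+1)=f(k)+3k+1$. You instead give a direct global description --- one nullary, one unary operation per covering pair not terminating at $0$, one binary operation per interior join-reducible element from its two lower covers, and one per consecutive pair of maximal elements --- and reduce correctness to a single closure lemma. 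Unwinding the paper's recursion actually produces essentially the same operation set as yours (its $f_{\{b_i,c_i\},b_{i+1}}$ are your interior-join operations and its $f_{\{b_k,c_k\},1}$ are your consecutive-pair operations), so the two constructions agree; the difference is organizational. What your approach buys is that it makes explicit the verification the paper leaves implicit, namely that the chosen binary family really closes every generated down-set under all joins; what the paper's induction buys is that the arithmetic is automatic and no such global lemma need be stated. Your closure lemma is only sketched, but the sketch is sound: indexing $P_n\setminus\{1\}$ by intervals $[i,j]$, the statement ``if $a_{[i,j]}$ and $a_{[k,l]}$ lie in a closed down-set with $i\le k\le j\le l$ then so does $a_{[k,j]}$'' goes through by induction on $(k-i)+(l-j)$ (your covering distance), since both lower covers $a_{[k-1,j]}$ and $a_{[k,j+1]}$ of the target are intersections of strictly closer pairs; the disjoint-interval case then yields consecutive maximal elements and hence $1$. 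Writing that induction out would complete the proof.
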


\begin{proof}
We will proceed by induction. We begin with the base case of $n=2$, see Figure~\ref{p_2}.
\begin{figure}
    \centering
    \caption{$P_2$ Lattice}
    \label{p_2}
\begin{tikzpicture} [scale= .7]
  \node (one) at (0,2) {$1$};
  \node (a_1) at (1,1) {$a_1$};
  \node (a_2) at (-1,1) {$a_2$};
  \node (zero) at (0,0) {$0$};
  \node (name) at (0,-1) {$P_2$};
  \draw (one) -- (a_1) -- (zero) -- (a_2)  -- (one);
\end{tikzpicture}
\end{figure}
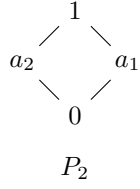

This lattice is isomorphic to the $N_1$, $M_2$, and $N_0^*$. From all the previous propositions regarding those lattices, we see that we can create an algebraic representation of $4$ elements and $4$ operations, as before.


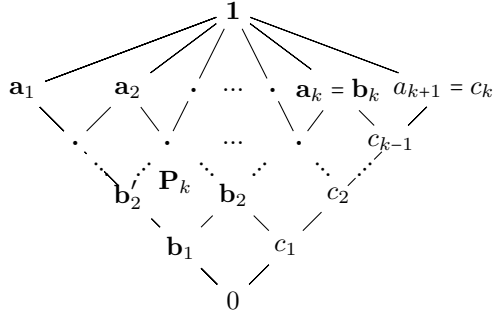
\begin{figure}
    \centering
    \caption{Pinecone Lattice}
    \label{pinecone}
\begin{tikzpicture}[scale=.7]
  \node (one) at (0,0) {${0}$};
  \node (a_1) at (-1,1) {$\textbf{b}_1$};
  \node (a_2) at (1,1) {$c_1$};
  \node (a_3) at (0,2) {$\textbf{b}_2$};
  \node (a_4) at (-2.5,2.5) {$\ddots$};
  \node (a_4*) at (-2,2) {$\textbf{b}'_2$};
  \node (a_5) at (2.5,2.5) {$\udots$};
  \node (a_5*) at (2,2) {$c_2$};
  \node (a_6) at (3,3) {$c_{k-1}$};
  \node (a_7) at (-3,3) {$\textbf{.}$};
  \node (a_8) at (2,4) {$\textbf{a}_k = \textbf{b}_k$};
  \node (a_9) at (-2,4) {$\textbf{a}_2$};
  \node (a_10) at (.75,4) {$\textbf{.}$};
  \node (a_11) at (1.25,3) {$\textbf{.}$};
  \node (a_12) at (-.75,4) {$\textbf{.}$};
  \node (a_13) at (-1.25,3) {$\textbf{.}$};
  \node (a_{n-1}) at (-4,4) {$\textbf{a}_1$};
  \node (a_n) at (4,4) {$a_{k+1} = c_{k}$};
  \node (zero) at (0,5.5) {$\textbf{1}$};
  \node (p_k) at (-1.1,2.25) {$\textbf{P}_k$};
  \node (dots) at (.5,2.5) {$\textbf{$\udots$}$};
  \node (ddots) at (-.5,2.5) {$\ddots$};
  \node (dotss) at (0,3) {...};
  \node (dotts) at (0,4) {...};
  \node (doots) at (1.7,2.5) {$\ddots$};
  \node (dts) at (-1.7,2.5) {$\udots$};
  \draw (one) -- (a_1) -- (a_4*) -- (a_4) -- (a_7) -- (a_{n-1}) -- (zero)  -- (a_n) -- (a_6) -- (a_5) -- (a_5*) -- (a_2) -- (one) -- (a_1) -- (a_3) -- (a_2) -- (one) -- (a_1) -- (a_4*) -- (a_4) -- (a_7) -- (a_{n-1}) -- (zero) -- (a_8) -- (a_6) -- (a_n) -- (zero) -- (a_9) -- (a_7) -- (a_{n-1}) -- (zero) -- (a_10) -- (a_11) -- (a_8) -- (zero) -- (a_9) -- (a_13) -- (a_12) -- (zero);
\end{tikzpicture}
\end{figure}

Assume that for a Pinecone lattice with $k$ maximal elements, the statement holds.  
Let us consider the Pinecone lattice with $k+1$ maximal elements. For every element of the lattice, we will create an element of the algebra.
We will label $P_{k+1}$ as shown in Figure~\ref{pinecone} with $b_1$ and $c_1$ being the minimal elements of the lattice. Notice that the set $\uparrow b_1=\{x \in P_{k+1}:x\geq b_1\}$ is isomorphic to $P_{k}$. We will apply the operations of $P_k$ to $\uparrow b_1$ as if it were $P_k$. We will change the nullary operation and add operations to include the elements not in the upset of $b_1$. The nullary operation from $P_k$ is $f_{\emptyset, b_1}=b_1$. We will change this to $f_{\emptyset, 0}=0$. With the shift of the nullary operation, we will also need the following two functions:
\[
f_{\{b_2\},b_1}(x)=
\begin{cases}
b_1 &\text{if } x=b_2,\\
x &\text{ otherwise } 
\end{cases}
\text{ and }
f_{\{b_2'\},b_1}(x)=
\begin{cases}
b_1 &\text{if } x=b_1',\\
x &\text{ otherwise } 
\end{cases}
\]

where $b_2'$ and $b_2$ are the elements that cover $b_1$.
Now we will consider the elements of $P_{k+1}$ not in $\uparrow b_1$. For example, much like the chain, if you have $c_{i+1}$, you want to be sure you generate $c_{i}$. If you have $c_{i}$ and $b_{i}$, you want to be able to generate $b_{i+1}$. To do this, we create functions of the form
\[
f_{\{b_i, c_i\},b_{i+1}}(x_1,x_2)=
\begin{cases}
b_{i+1} &\text{if } \{x_1,x_2\}=\{b_i,c_i\},\\
x_1 &\text{ otherwise } 
\end{cases}
\]

\[
f_{\{b_{i+1}\},c_i}(x)=
\begin{cases}
c_i &\text{if } x=b_{i+1},\\
x &\text{ otherwise } 
\end{cases}
\]

\[
f_{\{c_{i+1}\},c_i}(x)=
\begin{cases}
c_{i+1} &\text{if } x=c_i,\\
x &\text{ otherwise } 
\end{cases}
\]

where $1\geq i\geq k-1$. 
We also create the following functions where element $1$ acts as $c_{i+1}$ in the previous functions.

\[
f_{\{b_k, c_k\},1}(x_1,x_2)=
\begin{cases}
1 &\text{if } \{x_1,x_2\}=\{b_k,c_k\},\\
x_1 &\text{ otherwise } 
\end{cases}
\]

\[
f_{\{1\},c_k}(x)=
\begin{cases}
c_k &\text{if } x=1,\\
x &\text{ otherwise } 
\end{cases}
\]

This completes our representation and gives us a total of
\\
$\frac{3k^2-k+2}{2}+2+3(k-1)+2=\frac{3k^2-k+2}{2}+3k+1=\frac{3(k+1)^2-(k+1)+2}{2}$
functions. 
\end{proof}

For $n>2$, $P_n$ does not have a group representation. However, $P_3$ is known to be isomorphic to a subloop lattice \cite{pn_loop}. For $n>3$, it is unknown whether $P_n$ has a loop representation or not. 

\begin{definition}
A Christmas Tree lattice, $T_n$, is defined as follows:

T1: $T_n$ has $n$ minimal elements $a_1,...,a_n$

T2: For $i\neq j$ and $1 \leq i, j \leq n$, $a_i \wedge a_j = 0$

T3: For $b \in T_n/\{0\}$, $a_i \vee a_j = b$ for exactly one pair of $i$ and $j$

T4: For $i \neq k$ and $1 \leq k \leq n$, $a_i \vee a_j \neq a_k \vee a_j$ and $a_i \vee a_k = a_i \vee a_{i+1} \vee ... \vee a_k$
\end{definition}.

The Christmas Tree lattice is the dual of the Pinecone lattice. A $T_n$ lattice has the same number of elements as the $P_n$. Many of the functions will be similar; however, their representations do have different numbers of functions.

\begin{proposition}[Christmas Tree Lattice]
Given a Christmas Tree lattice with $n$ minimal elements, there exists an algebraic representation with $\frac{n^2+n+2}{2}$ elements and $\frac{3n^2-3n+2}{2}$ functions.
\end{proposition}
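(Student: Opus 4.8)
The plan is to run the same kind of induction used for the Pinecone, but to exploit the one structural feature that makes the Christmas Tree cheaper. First I would coordinatize $T_n$: setting $d_{i,j}=a_i\vee a_{i+1}\vee\cdots\vee a_j$ for $1\le i\le j\le n$, conditions T2--T4 force the nonzero elements to be exactly these $\binom{n+1}{2}$ ``intervals,'' with $d_{i,i}=a_i$, $d_{1,n}=1$, and $d_{i,j}\le d_{i',j'}$ precisely when $[i,j]\subseteq[i',j']$; in particular $d_{i,j}$ covers exactly $d_{i+1,j}$ and $d_{i,j-1}$ when $i<j$, while each $a_i$ covers only $0$. Since the closure of the excerpt is $\langle B\rangle=\downarrow(\bigvee B)$, the closed sets are exactly the principal ideals, so the task is to choose a minimal operation set whose only closed subsets are the $\downarrow d_{i,j}$ together with $\{0\}$. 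The base case is $T_2\cong M_2$, already represented with $4=\frac{3\cdot 4-3\cdot 2+2}{2}$ operations.

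For the inductive step I would embed $T_k$ into $T_{k+1}$ as the principal ideal $\downarrow d_{1,k}$. This is the crucial contrast with the Pinecone, where $P_k$ sits as the \emph{upset} $\uparrow b_1$: there the old bottom $b_1$ is no longer least, forcing the nullary operation to be re-pointed to $0$ and two extra unary operations to rebuild $b_1$. Here $0$ is the bottom of both lattices, so $f_{\emptyset,0}$ and every operation of $T_k$ survive unchanged. The new elements are the $k+1$ intervals ending at $k+1$. I would add, for $1\le i\le k$, the $2k$ unary ``descent'' operations $f_{\{d_{i,k+1}\},\,d_{i+1,k+1}}$ and $f_{\{d_{i,k+1}\},\,d_{i,k}}$ (the two lower covers of each new reducible element, with $d_{k+1,k+1}=a_{k+1}$), together with $k$ binary ``join'' operations described below. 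Since $\frac{3(k+1)^2-3(k+1)+2}{2}-\frac{3k^2-3k+2}{2}=3k$, producing exactly $3k$ new operations closes the induction.

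The step I expect to be the real obstacle is pinning down the $k$ binary operations and proving the set is simultaneously sufficient and irredundant. The naive guess, the lower-cover join $f_{\{d_{i+1,k+1},\,d_{i,k}\},\,d_{i,k+1}}$, is \emph{wrong}: the two-element antichain $\{a_i,a_{k+1}\}$ generates the downset $\{0,a_i,a_{k+1}\}$, which is not a principal ideal, yet no lower-cover operation can fire on it because the intermediate generators $a_{i+1},\dots,a_k$ are unreachable. The correct choice is the \emph{endpoint} join $f_{\{a_i,a_{k+1}\},\,d_{i,k+1}}$, legitimate because $a_i\vee a_{k+1}=d_{i,k+1}$ by T4. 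For sufficiency I would show that any closed downset with two maximal elements $d_{p,q},d_{r,s}$ ($p<r$) must in fact satisfy $q<s$, so $a_p$ and $a_s$ both lie in it and the endpoint join $f_{\{a_p,a_s\},\,d_{p,s}}$ fires, contradicting maximality; hence every closed downset is principal. For irredundancy I would note that inside $\downarrow d_{i,j}$ the cover $d_{i+1,j}$ is reachable from $d_{i,j}$ only through that one descent step---an upward join could supply it only from $a_{i+1},a_j$, which themselves require the same descent---so no descent operation may be dropped, and each endpoint join is forced by its two-element downset.

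Assembling the three families gives a direct count as a check: one nullary operation, $n(n-1)$ descent operations (two for each of the $\binom{n}{2}$ reducible intervals), and $\binom{n}{2}$ endpoint joins, for a total of $1+n(n-1)+\binom{n}{2}=\frac{3n^2-3n+2}{2}$. Comparing with the Pinecone, the descent counts agree and the entire discrepancy of $n-2$ sits in the join family: in $P_n$ every pair of maximal elements joins to the single top $1$, so the top element absorbs extra join operations, whereas in $T_n$ distinct pairs $\{a_i,a_j\}$ join to distinct elements $d_{i,j}$ and each reducible element needs exactly one join. This is the precise sense in which the join-based closure operator treats the two dual shapes asymmetrically.
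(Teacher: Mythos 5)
Your proof is correct, and it is not merely a stylistic variant of the paper's argument: it repairs a genuine defect in it. Both proofs use the same inductive skeleton --- realize $T_k$ as the principal ideal below the co-atom $b_1=d_{1,k}$, keep its operations together with the nullary $f_{\emptyset,0}$, and add $2k$ unary descents to the lower covers of the new reducible elements plus $k$ binary joins, for $3k$ new operations in all. The difference lies entirely in the binary joins: the paper uses the lower-cover joins $f_{\{b_{i+1},c_{i+1}\},c_i}$ and $f_{\{b_1,c_1\},1}$, whereas you use the endpoint joins $f_{\{a_i,a_{k+1}\},d_{i,k+1}}$, and your objection to the lower-cover choice is well founded. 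Already for $T_3$ the paper's ten operations are $f_{\emptyset,0}$, six unary descents, and the binaries $f_{\{a_1,a_2\},a_1\vee a_2}$, $f_{\{a_2,a_3\},a_2\vee a_3}$, and $f_{\{a_1\vee a_2,\,a_2\vee a_3\},1}$; the set $\{0,a_1,a_3\}$ is closed under all of them, since no trigger set other than $\emptyset$ is contained in it, yet it is not a principal ideal, so the paper's algebra has a subalgebra lattice with at least eight elements and is not a representation of $T_3$. The same defect persists for all $n\geq 3$. Your operation $f_{\{a_1,a_3\},1}$ destroys exactly this spurious subalgebra, and your general argument --- every subalgebra is a downset because of the descents, and a downset with two maximal elements $d_{p,q}$ and $d_{r,s}$ with $p<r$ (hence $q<s$) contains $a_p$ and $a_s$ and so fails to be closed under $f_{\{a_p,a_s\},d_{p,s}}$ --- supplies the verification that the subalgebras are precisely the principal ideals, which the paper omits and which its own operations cannot satisfy. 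Your direct count $1+n(n-1)+\binom{n}{2}=\frac{3n^2-3n+2}{2}$ matches the statement, so the proposition stands with your operations substituted for the paper's; the concluding remarks on irredundancy are not needed, since the statement only asserts the existence of a representation with this many functions.
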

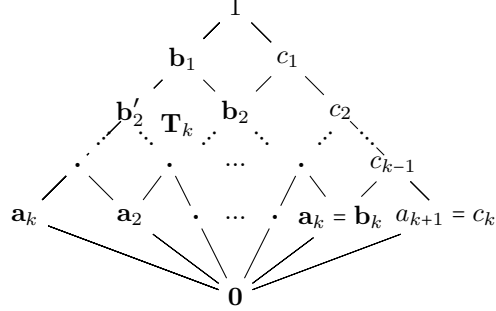
\begin{figure}
    \centering
    \caption{Christmas Tree Lattice}
    \label{christmas}
    \begin{tikzpicture}[scale=.7]
  \node (one) at (0,0) {${1}$};
  \node (a_1) at (-1,-1) {$\textbf{b}_1$};
  \node (a_2) at (1,-1) {$c_1$};
  \node (a_3) at (0,-2) {$\textbf{b}_2$};
\node (a_4*) at (-2,-2) {$\textbf{b}'_2$};
  \node (a_4) at (-2.5,-2.5) {$\udots$};
\node (a_5*) at (2,-2) {$c_2$};
  \node (a_5) at (2.5,-2.5) {$\ddots$};
  \node (a_6) at (3,-3) {$c_{k-1}$};
  \node (a_7) at (-3,-3) {$\textbf{.}$};
  \node (a_8) at (2,-4) {$\textbf{a}_k = \textbf{b}_k$};
  \node (a_9) at (-2,-4) {$\textbf{a}_2$};
  \node (a_10) at (.75,-4) {$\textbf{.}$};
  \node (a_11) at (1.25,-3) {$\textbf{.}$};
  \node (a_12) at (-.75,-4) {$\textbf{.}$};
  \node (a_13) at (-1.25,-3) {$\textbf{.}$};
  \node (a_{n-1}) at (-4,-4) {$\textbf{a}_k$};
  \node (a_n) at (4,-4) {$a_{k+1} = c_k$};
  \node (zero) at (0,-5.5) {$\textbf{0}$};
  \node (t_k) at (-1.1,-2.25) {$\textbf{T}_k$};
\node (dots) at (.5,-2.5) {$\ddots$};
  \node (ddots) at (-.5,-2.5) {$\udots$};
  \node (dotss) at (0,-3) {...};
  \node (dotts) at (0,-4) {...};
\node (doots) at (1.7,-2.5) {$\udots$};
  \node (dts) at (-1.7,-2.5) {$\ddots$};
  \draw (one) -- (a_1) -- (a_4*) -- (a_4) -- (a_7) -- (a_{n-1}) -- (zero)  -- (a_n) -- (a_6) -- (a_5) -- (a_5*) -- (a_2) -- (one) -- (a_1) -- (a_3) -- (a_2) -- (one) -- (a_1) -- (a_4*) -- (a_4) -- (a_7) -- (a_{n-1}) -- (zero) -- (a_8) -- (a_6) -- (a_n) -- (zero) -- (a_9) -- (a_7) -- (a_{n-1}) -- (zero) -- (a_10) -- (a_11) -- (a_8) -- (zero) -- (a_9) -- (a_13) -- (a_12) -- (zero);
\end{tikzpicture}
\end{figure}

\begin{proof}
We begin by considering the base case of $n=2$.
Note, $P_2$ and $T_2$ are isomorphic, meaning that $T_2$ is also isomorphic to $N_1, M_2, N_0^*,$ and $P_2$.
This lattice will generate 4 operations. Plugging $n=2$ into our equation, we get $\frac{3(2)^2-3(2)+2}{2}=4$.

Assume that for a Christmas Tree lattice of $k$ minimal elements, the proposition holds.
Let us consider the lattice with $k+1$ minimal elements. We will label $T_{k+1}$ as noted in Figure~$\ref{christmas}$. 
For every element of the lattice, we will create an element of the algebra. Notice that the set $\downarrow b_1=\{x \in T_{k+1}:x\leq b_1\}$ is isomorphic to $T_{k}$. We will apply the operations of $T_k$ to $\downarrow b_1$ as if it were $T_k$. Since the zero element of $\downarrow b_1$ is the same as the zero element of $T_k$, we don't have to alter the nullary operation as we did in the proof of the Pinecone lattice. However, there are elements outside of $\downarrow b_1$ for which operations are also needed. We will create functions of the form
\[
f_{\{b_{i+1}, c_{i+1}\},c_i}(x_1,x_2)=
\begin{cases}
c_i &\text{if } \{x_1,x_2\}=\{b_{i+1},c_{i+1}\},\\
x_1 &\text{ otherwise } 
\end{cases}
\]

\[
f_{\{c_i\},b_{i+1}}(x)=
\begin{cases}
b_{i+1} &\text{if } x=c_i,\\
x &\text{ otherwise } 
\end{cases}
\]

\[
f_{\{c_i\},c_{i+1}}(x)=
\begin{cases}
c_i &\text{if } x=c_{i+1},\\
x &\text{ otherwise } 
\end{cases}
\]

where $1\geq i\geq k-1$. 
We will also create the following functions:

\[
f_{\{b_1, c_1\},1}(x_1,x_2)=
\begin{cases}
1 &\text{if } \{x_1,x_2\}=\{b_1,c_1\},\\
x_1 &\text{ otherwise } 
\end{cases}
\]

\[
f_{\{1\},c_1}(x)=
\begin{cases}
c_1 &\text{if } x=1,\\
x &\text{ otherwise } 
\end{cases}
\]

\[
f_{\{1\},b_1}(x)=
\begin{cases}
b_1 &\text{if } x=1,\\
x &\text{ otherwise } 
\end{cases}
\]

This completes our representations and gives us a total of \\$\frac{3k^2-3k+2}{2}+3(k-1)+3=\frac{3k^2-3k+2}{2}+3k=\frac{3k^2+3k+2}{2}=\frac{3(k+1)^2-3(k+1)+2}{2}$ functions. 

\end{proof}

\section{Power Set Lattice}

In terms of constructing Birkhoff and Frink's algebraic representations, one of the more difficult families of lattices examined in this paper is the power set lattice. The power set lattice $\mathcal{P}(S)$ is the lattice formed when the partial order of set inclusion is applied to the elements of the power set of a set with $n$ elements. Figure~\ref{powerset-3} is a Hasse diagram of $\mathcal{P}(S)$ where $|S|=3$.

\begin{figure}
    \centering
    \caption{$\mathcal{P}(S)$ where $S = \{1,2,3\}$}
    \label{powerset-3}

\begin{tikzpicture}[scale=.7]
  \node (one) at (0,2.5) {$\{1, 2, 3\}$};
  \node (a_1) at (-2,1) {$\{1, 2\}$};
  \node (a_2) at (0,1) {$\{1, 3\}$};
  \node (a_3) at (2,1) {$\{2, 3\}$};
  \node (a_4) at (-2,-.5) {$\{1\}$};
  \node (a_5) at (0,-.5) {$\{2\}$};
  \node (a_6) at (2,-.5) {$\{3\}$};
  \node (zero) at (0,-2) {$\emptyset$};
  \draw (one) -- (a_1) -- (a_4) -- (zero) -- (a_5) -- (a_1) -- (one) -- (a_2) -- (a_4) -- (zero) -- (a_6) -- (a_2) -- (one) -- (a_3) -- (a_6) -- (zero) -- (a_5) -- (a_3) -- (one);
\end{tikzpicture}
\end{figure}
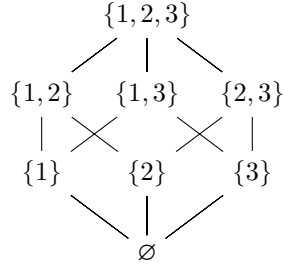

One may recognize the $\mathcal{P}(S)$ lattice where $|S|=3$ as a subgroup lattice. Every power set lattice is isomorphic to a subgroup lattice. The $\mathcal{P}(S)$ lattice is isomorphic to the subgroup lattice of $C_{p_1,...,p_n}$ for distinct primes $p_1,...,p_n$ where $|S|=n$. Thus each $\mathcal{P}(S)$ has an algebraic representation with 3 functions. However, we will consider the Birkhoff and Frink method for finding algebraic representations in the following proposition.

\begin{proposition}[Power Set Lattice]
Given a power set lattice $\mathcal{P}(S)$ where $|S|=n$, there exists an algebraic representation with $2^n$ elements and $2^n - n + \sum_{i=2}^{n} i\binom{n}{i}$ functions.

\end{proposition}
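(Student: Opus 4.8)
The plan is to construct the algebra $\mathbf{A} = (L, F)$ directly on the underlying set $L = \mathcal{P}(S)$, identifying each lattice element with a subset $T \subseteq S$. Here the bottom element is $\emptyset$, the atoms are the singletons $\{x\}$, the join is union, and a pair $(U,T)$ is a covering pair exactly when $U \subseteq T$ with $|T| = |U| + 1$. Since the subalgebra corresponding to a lattice element $T$ must be the principal ideal $\{V : V \subseteq T\} = \mathcal{P}(T)$, I would design the operations so that the closure operator satisfies $\langle \mathcal{C} \rangle = \mathcal{P}\!\left(\bigcup \mathcal{C}\right)$ for every $\mathcal{C} \subseteq L$, and then read off the operation count from that design.

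Following the reduced Birkhoff and Frink method, I would include three families of operations. First, the single nullary operation $f_{\emptyset,\emptyset}$ producing the bottom element $\emptyset$. Second, for every covering pair $(U,T)$ with $|U| \geq 1$, the unary ``downward'' operation $f_{\{T\},U}$; the operations $f_{\{T\},\emptyset}$ are discarded as redundant with the nullary operation by the second reduction rule, and all $f_{B,b}$ with $b \in B$ are discarded by the first rule. Third, for every $T$ with $|T| \geq 2$, a single ``join'' operation $f_{B_T,T}$ whose generating set $B_T$ is a minimal family of proper subsets of $T$ whose union is $T$, for concreteness the family of singletons $\{\{x\} : x \in T\}$.

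The core of the argument is the identity $\langle \mathcal{C} \rangle = \mathcal{P}\!\left(\bigcup \mathcal{C}\right)$. For the inclusion $\supseteq$, I would first observe that the downward operations drive every generator down to each singleton it contains, so every atom $\{x\}$ with $x \in \bigcup \mathcal{C}$ lies in $\langle \mathcal{C} \rangle$; then, for each $U \subseteq \bigcup \mathcal{C}$ with $|U| \geq 2$, all the atoms of $U$ are already present, so the join operation $f_{B_U,U}$ fires and places $U$ in the closure, while the nullary operation supplies $\emptyset$. For the inclusion $\subseteq$, I would check that each operation sends tuples from $\mathcal{P}\!\left(\bigcup \mathcal{C}\right)$ back into $\mathcal{P}\!\left(\bigcup \mathcal{C}\right)$: the ``otherwise'' branch returns the first argument, and the active branch of a downward or join operation returns a subset of $\bigcup \mathcal{C}$. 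It then follows that the subalgebras are precisely the sets $\mathcal{P}(T)$, and that the order isomorphism $T \mapsto \mathcal{P}(T)$ identifies $\mathbf{Sub}(\mathbf{A})$ with $\mathcal{P}(S)$.

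Finally I would count. The nullary family contributes $1$. The join family contributes one operation for each subset of size at least $2$, namely $2^n - n - 1$. For the downward family, the covering pairs $(U,T)$ with $|U| \geq 1$ are enumerated by choosing the top set $T$ of size $i$ (there are $\binom{n}{i}$ such sets) and deleting one of its $i$ elements, giving $\sum_{i=2}^{n} i\binom{n}{i}$ operations. Summing, $1 + (2^n - n - 1) + \sum_{i=2}^{n} i\binom{n}{i} = 2^n - n + \sum_{i=2}^{n} i\binom{n}{i}$, as claimed. I expect the main obstacle to be the sufficiency direction $\langle \mathcal{C} \rangle \supseteq \mathcal{P}\!\left(\bigcup \mathcal{C}\right)$, specifically making precise that the downward operations always reach every relevant atom so that the join operations can subsequently assemble every subset of $\bigcup \mathcal{C}$, together with the bookkeeping that the listed operations are exactly those surviving the two reduction rules, so that the stated number is indeed the count the method produces.
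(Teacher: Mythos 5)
Your proposal is correct and follows essentially the same architecture as the paper's proof: one nullary operation producing the bottom element, $i$ unary ``descent'' operations for each element of cardinality $i\geq 2$ (giving $\sum_{i=2}^{n} i\binom{n}{i}$), and one $k$-ary join operation per element of cardinality $k\geq 2$ assembling it from its singletons (giving $2^n-n-1$), for the same total. The only difference is that your unary operations descend one covering step at a time, from $T$ to its coatoms, whereas the paper's $f_{\{X\},\{x_i\}}$ descend directly from $X$ to each singleton $\{x_i\}\subseteq X$; both families have exactly $|X|$ members per element $X$ and generate the same closures (yours after iterating down the covering chains), so the count and the verification go through either way.
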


\begin{proof}
Before we start counting functions, we will clear up some notation. Here, $X$ is an element of our lattice $\mathcal{P}(S)$, meaning 
\[
X \in \mathcal{P}(S) \text{ and } X \subseteq S
\]
For $x_i$ an element of $S$ we have $\{x_i\}$ the singleton set in $\mathcal{P}(S)$. These singleton elements $\{x_i\}$ are minimal elements of $\mathcal{P}(S)$.

We begin, as always, with the nullary operation. We then consider every element with $|X|=k > 1$, taking into consideration that each element of the lattice is a set. 

For each of these elements $X$, we will create functions of two forms:
\[
f_{\{X\},\{x_i\}}(A)=
\begin{cases}
\{x_i\} &\text{if } A=X,\\
A &\text{otherwise } 
\end{cases}
\]
\[
f_{\{\{x_1\},\{x_2\}, ...,\{x_k\}\}, X}(A_1, A_2,...,A_k)=
\begin{cases}
X &\text{if } \{A_1, A_2,...,A_k\}\\
&\hspace{1cm}=\{\{x_1\},\{x_2\}, ...,\{x_k\}\}\\
A_1 &\text{otherwise } 
\end{cases}
\]

where $x_i\in X$. The first function will take the element $X$ and give us back the minimal elements less than $X$. These minimal elements will be the singleton subsets of $X$. Thus for each $X\in \mathcal{P}(S)$ with $|X|=k>1$, we will have $k$ of these functions. Note that there are $\sum_{i=2}^{n} \binom{n}{i}=2^n - n - 1$ elements with a carnality greater than one. Thus, overall we will need $\sum_{i=2}^{n} i\binom{n}{i}$ functions of this form.

The second function allows the minimal elements less than $X$ to generate $X$. We need just one of these for each element with carnality greater than one, or $2^n -n-1$ overall.

Adding the number of operations created, we get $1+2^n -n-1 + \sum_{i=2}^{n} i\binom{n}{i}=2^n - n + \sum_{i=2}^{n} i\binom{n}{i}$ total operations.

\end{proof}

\pagebreak

\section{Conclusion}

We considered a few families of lattices.  Each family considered is a very specific type of lattice.  A further look at this topic would include broader families of lattices.  For example, Christmas Tree and power set lattices both have the property that each element is the join of minimal elements. One might consider the family of lattices with this property.  

Further lines of research would also include finding an algebraic representation type for a given family of lattices. In the representations presented, the number of functions changes as the number of elements in the lattice changes. In the case of finite chain lattices and finite power set lattices, each can be represented as a group.  In the group representations, the number of elements in the group would change as the size of lattice changes, but the number of functions stays the same. Of the families of lattices considered in this paper, these are the only two which are known to have the same type of algebraic representation for every lattice in the family.  There is ongoing research on whether or not every lattice can be represented as a loop \cite{loop}.

\section*{References}

\bibliographystyle{amsalpha}

\begin{biblist}

    \bib{birk_frink}{article}{
  author = {Garrett Birkhoff},
  author={Orrin Frink Jr.},
  title = {Representations of lattices by sets},
  journal = {Trans. Amer. Math. Soc.},
  volume = {64},
  year = {1948},
  pages = {299--316}
  }

\bib{univ_alg}{book}{
  author = {Stanley Burris},
  author={H.P. Sankappanavar },
     title = {A course in universal algebra},
    series = {Graduate Texts in Mathematics},
    volume = {78},
 publisher = {Springer-Verlag, New York-Berlin},
      year = {1981}
}

\bib{loop}{article}{
    author  = {Tuval Foguel},
    author = {Josh Hiller},
    TITLE = {A note on subloop lattices},
   JOURNAL = {Results Math.},
    VOLUME = {69},
      YEAR = {2016},
    NUMBER = {1-2},
     PAGES = {11--21}
  }

\bib{pn_loop}{unpublished}{
title= {Are lattices of closure operators isomorphic to subloop lattices?}, 
author= {Martha Kilpack},
note= {Fourth Mile High Conference on Nonassociative Mathematics},
URL= {http://www.cs.du.edu/~petr/milehigh/2017/old/conference_schedule.html}
}

\bib{subgroup}{book}{
  author = {Schmidt, Roland},
  title = {Subgroup lattices of groups},
  series = {De Gruyter Expositions in Mathematics},
  volume = {14},
  publisher = {Walter de Gruyter \& Co., Berlin},
  year = {1994}
}

\end{biblist}

\end{document}